\documentclass{article}
\usepackage{latexsym}
\usepackage{amssymb}
\usepackage{amsmath}
\newtheorem{theorem}{Theorem}[section]
\newtheorem{lemma}[theorem]{Lemma}
\newtheorem{corollary}[theorem]{Corollary}
\begin{document}
\title{\bf A Generalisation of Maximal (k,b)-Linear-Free Sets of Integers}
\author{\Large Nguyen Quang Minh \\University of Cambridge \\ Trinity College, Cambridge CB2 1TQ, England\\
{\tt quangminh112358@gmail.com}}
\date{}
\maketitle
\begin{abstract}
Fix integers $k,b,q$ with $k\ge 2,b\ge 0,q\ge 2$. Define the function $p$ to be: $p(x)=kx+b$. We call a set $S$ of integers \emph{$(k,b,q)$-linear-free} if $x\in S$ implies $p^i (x)\notin S$ for all $i=1,2,\dots,q-1$, where $p^i (x)=p\left(p^{i-1}(x)\right)$ and $p^0 (x)=x$. Such a set $S$ is maximal in $[n]:=\{1,2,\dots,n\}$ if $S\cup \{t\},\forall t\in [n]\backslash S$ is not $(k,b,q)$-linear-free. Let $M_{k,b,q} (n)$ be the set of all maximal $(k,b,q)$-linear-free subsets of $[n]$, and define $g_{k,b,q} (n)=\min_{S\in M_{k,b,q} (n)}|S|$ and $f_{k,b,q} (n)=\max_{S\in M_{k,b,q} (n)}|S|$. In this paper, formulae for $g_{k,b,q} (n)$ and $f_{k,b,q} (n)$ are proposed. Also, it is proven that there is at least one maximal $(k,b,q)$-linear-free subset of $[n]$ with exactly $x$ elements for any integer $x$ between $g_{k,b,q} (n)$ and $f_{k,b,q} (n)$, inclusively.
\end{abstract}

\section{Introduction}
A set of integers $S$ is said to be \emph{$k$-multiple-free} if $x\neq ky$ for all $x,y\in S$. Let $g_k (n)=\max\{|A|:A\subseteq [n]:=\{1,2,\dots,n\}  \textrm{ is $k$-multiple-free}\}$. E.T.H Wang \cite{b3}, when working on \emph{double-free} sets, showed that 
$$g_2(n) = \left\lceil{\frac{n}{2}}\right\rceil + g_2\left(\left\lfloor{\frac{n}{4}}\right\rfloor\right),$$
with $g_2 (0)=0$, and an asymptotic formula for $g_2 (n)$ is given by $g_2(n) = \frac{2}{3}n + O(\log n)$. In a similar vein, Leung and Wei \cite{b5} proved that for every integer $r>1$, $g_r (n)=\frac{r}{r+1}n+O(\log n).$

Instead of $k$-multiple-free subsets, one can ask more general questions about a subset $S$ of $[n]$ such that: for any two elements $x$ and $y$ of $S,$ $\frac{y}{x}$ does not belong to $C,$ where $C$ is a fixed set of rational numbers. For example, when $C=\{2,3\},$ such a subset $S$ is called \emph{strongly triple-free}, which is a subject of great interest in, for example, \cite{b4} and \cite{b8}. In fact, the genesis of this paper is the case when $C=\{k,k^2,\dots,k^q\},$ where $k$ and $q$ are positive integers.

On the other hand, You, Yu, and Liu \cite{b6} studied an alternative, natural generalisation of multiple-free sets, which are translation-free, or linear-free sets. In particular, a set $X$ of integers is called \emph{$(k,b)$-linear-free} if $x\in X$ implies $kx+b\notin X$. Obviously, when $b=0,$ $X$ is $k$-multiple-free. Define $g_{k,b}(n)=\min\{|X|:X\subseteq [n]\text{ is a maximal $(k,b)$-linear-free subset}\}$ and $f_{k,b}(n)=\max\{|X|:X\subseteq [n]\text{ is a maximal $(k,b)$-linear-free subset}\}$.  You, Yu, and Liu proposed explicit formulae for the two functions and showed that there is a maximal $(k,b)$-linear-free subset of $[n]$ with exactly $x$ elements for any integer $x$ between the minimum and maximum possible orders. Moreover, Liu and Zhou \cite{b1} also dealt with the same generalisation, albeit having a different approach.

This paper extends the aforementioned results by combining those two concepts. Henceforth, $k,b,$ and $q$ will denote integers with $k\ge 2, b\ge 0,$ and $q\ge 2$. Define the function $p$ to be: $p(x)=kx+b$. A set $S$ of integers is said to be \emph{$(k,b,q)$-linear-free} if $x\in S$ implies $p^i (x)\notin S,\forall i=1,2,\dots,q-1$, where $p^i (x)=p\left(p^{i-1}(x)\right)$ and $p^0 (x)=x$. Such a set $S$ is \emph{maximal} in $[n]$ if $S\cup \{t\}$ is not $(k,b,q)$-linear-free for any $t\in[n]\backslash S$. Let $M_{k,b,q} (n)$ be the set of all maximal $(k,b,q)$-linear-free subsets of $[n]$, and define $g_{k,b,q} (n)=\min_{S\in M_{k,b,q} (n)}|S|$ and $f_{k,b,q} (n)=\max_{S\in M_{k,b,q} (n)}|S|$. In this paper, formulae for $g_{k,b,q} (n)$ and $f_{k,b,q} (n)$ are derived. Also, it is proven that there is at least one maximal $(k,b,q)$-linear-free subset of $[n]$ of cardinality $x$ for any integer $x$ between $g_{k,b,q} (n)$ and $f_{k,b,q} (n)$, inclusively.

Notice that when $q=2$ we get back the notion of $(k,b)$-linear-free subsets: $f_{k,b,2} (n)=f_{k,b} (n)$ and $g_{k,b,2} (n)=g_{k,b} (n)$. 

There is yet another classical way to look at the topic. For a fixed $r \times s$ integer matrix $\textbf{A}$, let us call a subset $S_\textbf{A}(n)\subseteq [n]$ \emph{\textbf{A}-missing} if for every vector $\textbf{x}=\begin{pmatrix} x_1   \\ x_2\\ \vdots\\x_s \end{pmatrix}$ with $x_i\in S_\textbf{A}$ for all $i,$ the column vector $\textbf{Ax}$ has none of its entries being equal to $0.$ With this definition, take $\textbf{A}=\begin{pmatrix} k&  -1 \end{pmatrix}$ and $\textbf{B}=\begin{pmatrix} 2&  -1&0 \\ 3&  0&-1  \end{pmatrix},$ then \textbf{A}-missing subsets and \textbf{B}-missing subsets are, respectively, $k$-multiple-free subsets and strongly triple-free subsets. Naturally, we can further fix a $r\times 1$ matrix \textbf{b} and look at the entries of $\textbf{Ax}+\textbf{b}$. We specify the $(q-1) \times q$ matrix $\textbf{A}$ and the $(q-1) \times 1$ matrix $\textbf{b}$ to be
$$\textbf{A}=\begin{pmatrix}
    k & -1 & 0&\dots & 0  \\
    k^2 & 0 & -1&\dots & 0 \\
    \vdots & \vdots &\vdots& \ddots & \vdots \\
    k^{q-1} & 0 &0 & \dots & -1 
  \end{pmatrix}\hspace{0.5cm}\text{and}\hspace{0.5cm}\textbf{b}=\begin{pmatrix} p^1(0)   \\ p^2(0)\\ \vdots\\p^{q-1}(0) \end{pmatrix}.$$
Interpreted in terms of matrices, a $(k,b,q)$-linear-free subset $S_\textbf{A,b}(n)\subseteq [n]$ is equivalently a subset which satisfies: for every vector $\textbf{x}=\begin{pmatrix} x_1   \\ x_2\\ \vdots\\x_q \end{pmatrix}$ with $x_i\in S_\textbf{A,b}$ for all $i,$ the column vector $\textbf{Ax}+\textbf{b}$ has none of its entries being equal to $0.$ For instance, the case of $(k,b)$-linear-free subsets corresponds to $\textbf{A}=\begin{pmatrix}    k & -1 \\  \end{pmatrix}$ and $\textbf{b}=\begin{pmatrix}    b \\  \end{pmatrix},$ and none of the vectors $\textbf{x}=\begin{pmatrix} x_1   \\ x_2\end{pmatrix}$ with $x_1,x_2\in S_\textbf{A,b}$ is a solution to the equation $\textbf{Ax}+\textbf{b}=\textbf{0}.$

\section{Main Results}
To simplify expressions, we denote $\left\langle k^i\right\rangle=(k^i-1)/(k-1).$
\begin{theorem}
The recurrence relation for $f_{k,b,q} (n)$ is given by
$$f_{k, b, q}(n) = \left\lceil{\frac{n(k - 1) + b}{k}}\right\rceil + f_{k,b,q}\left(\left\lfloor{\frac{n-b\left\langle k^q \right\rangle}{k^q}}\right\rfloor\right),\hspace{0.8cm}\forall n > b$$
with $f_{k,b,q} (n)=0,\forall n<0$ and $f_{k,b,q} (n)=n,\forall n\in [0,b]$. Similarly,
$$g_{k, b, q}(n) = \left\lceil{\frac{n(k - 1) + b}{k}}\right\rceil + g_{k,b,q}\left(\left\lfloor{ \frac{n-b\left\langle k^{2q-1}\right\rangle}{k^{2q-1}}}\right\rfloor\right),\hspace{0.3cm}\forall n > b$$
with $g_{k,b,q} (n)=0,\forall n<0$ and $g_{k,b,q} (n)=n,\forall n\in[0,b]$. 
\end{theorem}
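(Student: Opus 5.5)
The plan is to reduce everything to chains. Since $p(x)=kx+b$ is strictly increasing and injective, the digraph on $[n]$ with edges $x\to p(x)$ (whenever $p(x)\le n$) is a disjoint union of directed paths (chains) $x_0,p(x_0),\dots,p^{L-1}(x_0)$. Here $x_0\in[n]$ is not of the form $p(y)$ with $y\in[n]$, and $p^{L}(x_0)>n$. The condition ``$x\in S\Rightarrow p^i(x)\notin S$ for $1\le i\le q-1$'' only relates elements of the same chain. On a chain, indexed by positions $0,\dots,L-1$, it says that two chosen positions differ by at least $q$. Maximality in $[n]$ says that every unchosen position lies within distance $q-1$ of a chosen one. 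Hence $S$ is maximal $(k,b,q)$-linear-free iff its restriction to each chain is maximal in this sense, and $f_{k,b,q}(n)=\sum_j F(L_j)$, $g_{k,b,q}(n)=\sum_j G(L_j)$, where $F(L)$ and $G(L)$ are the maximum and minimum sizes of such sets on a path of length $L$.

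Next I compute $F(L)=\lceil L/q\rceil$ and $G(L)=\lceil L/(2q-1)\rceil$. For $F$, the upper bound holds because the $\lceil L/q\rceil$ blocks of $q$ consecutive positions each contain at most one chosen position. The lower bound comes from positions $0,q,2q,\dots$, which form a maximal set. For $G$, the lower bound holds because each chosen position covers at most $2q-1$ positions. For the construction, I take positions $q-1,\,3q-2,\,\dots$ spaced $2q-1$ apart, and replace the last one by $L-1$ if it would exceed $L-1$. This keeps all gaps at least $q$ and covers everything with exactly $\lceil L/(2q-1)\rceil$ elements. This is routine but needs a small check at the end of the chain.

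The key step is to rewrite these sums via a ``depth'' function, which is where the recurrence appears. For $y\in[n]$ let $d_n(y)=\max\{j\ge 0: p^j(y)\le n\}$, the distance from $y$ to the top of its chain. Counting positions from the top, a chain of length $L$ contains exactly $\lceil L/r\rceil$ elements with depth $\equiv 0 \pmod r$, so $\sum_j\lceil L_j/r\rceil=\#\{y\in[n]: r\mid d_n(y)\}$.

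The elements of depth $0$ are those with $p(y)>n$, i.e. $y>(n-b)/k$. There are $n-\lfloor (n-b)/k\rfloor=\lceil (n(k-1)+b)/k\rceil$ of them (for $n>b$).

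For the elements of depth $\ge r$, note that $p^r(x)=k^rx+b\langle k^r\rangle$ is increasing. Thus $p^r(y)\le n$ iff $y\le m_r:=\lfloor (n-b\langle k^r\rangle)/k^r\rfloor$, and more generally $p^{j+r}(y)\le n$ iff $p^j(y)\le m_r$. Hence $d_n(y)=d_{m_r}(y)+r$ for $y\le m_r$. The elements with $r\mid d_n(y)$ and $d_n(y)\ge r$ are therefore exactly those $y\in[m_r]$ with $r\mid d_{m_r}(y)$. By the previous paragraph applied to $m_r$, there are $f_{k,b,q}(m_r)$ of them when $r=q$, and $g_{k,b,q}(m_r)$ of them when $r=2q-1$.

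Taking $r=q$ and $r=2q-1$ gives both recurrences. The base cases are consistent: for $0\le n\le b$, every $x\ge1$ satisfies $p(x)>n$, so all chains are singletons and the value is $n$. For $m_r\le 0$ the set $[m_r]$ is empty, giving $0$.

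The main obstacle I expect is the careful justification of the chain decomposition, together with the equivalence of global maximality and chainwise maximality (including that forbidden pairs never cross chains), and the endpoint check in the construction for $G(L)$. The depth-counting identity itself should be mechanical once monotonicity of $p^r$ is noted.
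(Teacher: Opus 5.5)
Your proof is correct, but it is organised differently from the paper's.

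\textbf{The paper's route.} It only ever looks at top segments of chains. For $f$ it splits $[n]$ into $X=[m_q]$ and $Y=[n]\setminus X$, and covers $Y$ by the $\lceil (n(k-1)+b)/k\rceil$ sets $Y_i$. Each $Y_i$ consists of the at most $q$ topmost elements of the chain ending at a depth-$0$ element $i$. A linear-free set meets each $Y_i$ at most once and meets $X$ in at most $f_{k,b,q}(m_q)$ elements. This bound is matched by the explicit maximal set $A_1\cup A_2$, where $A_2$ is the whole top layer. The recurrence for $g$ is then asserted to follow ``in the same fashion''.

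\textbf{Your route.} You solve the problem on whole chains once, getting $\lceil L/q\rceil$ and $\lceil L/(2q-1)\rceil$. You then derive both recurrences uniformly from the identity $\sum_j\lceil L_j/r\rceil=\#\{y\in[n]: r\mid d_n(y)\}$ together with the shift $d_n(y)=d_{m_r}(y)+r$ on $[m_r]$.

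\textbf{What each buys.} The paper's route gives directly the greedy-from-the-top construction it describes afterwards, without solving the path problem. Yours treats the minimum as rigorously as the maximum, which is a real gain. The literal analogue of the paper's argument for $g$ does not go through as stated: a maximal set need not restrict to a maximal set on $[m_{2q-1}]$, and its part there can be smaller than $g_{k,b,q}(m_{2q-1})$ would require. For instance, take $k=2$, $b=0$, $q=2$, $n=64$. The set $\{2,16,64\}$ is maximal on the chain $1,2,4,\dots,64$. Yet it meets the chain's part $\{1,2,4,8\}$ inside $[m_3]=[8]$ in only one element, whereas every maximal set on that part has two. So an exchange argument would be needed there, and your per-chain covering bound makes it unnecessary. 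As a by-product, your chain formula also gives the increment rule used in Theorem 2.7 and the chain-sum expression the paper states for $f_{k,0,q}$.
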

\noindent\emph{Proof.} Assume that $F$ is a maximal $(k,b,q)$-linear-free subset of $[n]$ with $f_{k,b,q} (n)$ elements. Fix $k$ and $q$, and partition $[n]$ into
    $$X = \left\{1, 2, 3, ..., \left\lfloor{\frac{n-b\left\langle k^q\right\rangle}{k^q}}\right\rfloor\right\}\hspace{0.3cm}\text{and}$$ $$Y = \left\{\left\lfloor{\frac{n-b\left\langle k^q\right\rangle}{k^q}}\right\rfloor+ 1,\left\lfloor{\frac{n-b\left\langle k^q\right\rangle}{k^q}}\right\rfloor+2, ..., n\right\}.$$
We will prove that, from the set $Y,F$ cannot have more than $\left\lceil{\frac{n(k - 1)+b}{k}}\right\rceil$ elements. Indeed, for each $i = \left\lfloor{\frac{n-b}{k}}\right\rfloor + 1, \left\lfloor{\frac{n-b}{k}}\right\rfloor + 2, ..., n$, let $h(i)$ be the greatest non-negative integer such that $p^{-h(i) } (i)\in [n]$, where  $p^{-i} (x)$ is the inverse function of $p^i (x)$. Define
$$Y_i=\left\{p^{-t} (i),0\le t\le \min\{ h(i),q-1\}\right\}.$$
The union of $Y_i$, $i = \left\lfloor{\frac{n-b}{k}}\right\rfloor + 1, \left\lfloor{\frac{n-b}{k}}\right\rfloor + 2,\dots,n$ $\Big($there are
$n- \left\lfloor{\frac{n-b}{k}}\right\rfloor=\left\lceil{\frac{n(k - 1)+b}{k}}\right\rceil$ such sets$\Big)$ covers $Y.$ This is because, for every $c\in Y$, there exists $j\in[1,q]$ satisfying $\left\lfloor{\frac{n-b\left\langle k^j\right\rangle}{k^j}}\right\rfloor + 1 \le c \le \left\lfloor{\frac{n-b\left\langle k^{j-1}\right\rangle}{k^{j - 1}}}\right\rfloor$. As $\left\lfloor{\frac{n-b}{k}}\right\rfloor + 1 \le p^{j - 1}(c) \le n,$ this guarantees $c\in Y_{p^{j-1} (c) }.$

Because $F$ cannot include more than one element in each $Y_i$, and can have at most $f_{k,b,q}\left(\left\lfloor{\frac{n-b\left\langle k^q\right\rangle}{k^q}}\right\rfloor\right)$ elements in $X$, so 
$$f_{k, b, q}(n) \le \left\lceil{\frac{n(k - 1) + b}{k}}\right\rceil + f_{k,b,q}\left(\left\lfloor{\frac{n-b\left\langle k^q\right\rangle}{k^q}}\right\rfloor\right).$$
On the other hand, let $A_1$ be a maximal $(k,b,q)$-linear-free subset of $X$ and $A_2 = \left\{\left\lfloor{\frac{n-b}{k}}\right\rfloor + 1, \left\lfloor{\frac{n-b}{k}}\right\rfloor + 2, ..., n\right\}$. Then $A=A_1\cup A_2$ is a maximal $(k,b,q)$-linear-free subset of $[n]$, hence 
$$f_{k, b, q}(n) \ge \left\lceil{\frac{n(k - 1) + b}{k}}\right\rceil + f_{k,b,q}\left(\left\lfloor{\frac{n-b\left\langle k^q\right\rangle}{k^q}}\right\rfloor\right).$$
The  upper and lower bounds complete the proof for the first part of Theorem 2.1. 

The second recurrence relation regarding $g_{k,b,q} (n)$ can be proven in the same fashion.
$\hfill\square$

Based on the proof of Theorem 2.1, a straightforward algorithm to construct a maximal $(k,b,q)$-linear-free subset $F$ of $[n]$ with $f_{k,b,q} (n)$ elements is proposed: first, check the largest number in $[n]$, which is $n$. As $p^i (n)\notin F,\forall i=1,2,\dots,q-1$, we add $n$ to $F$ (initially, $F$ is empty). Next, assume that we have checked until $x>1$. We then proceed to check $x-1$. If $p^i (x-1)\notin F,\forall i=1,2,\dots,q-1$, then $x-1$ is added to $F$. Otherwise, we proceed to check $x-2$. This algorithm terminates after number $1$ is checked. 

The next theorem provides formulae to determine $f_{k, b, q}(n)$ and $g_{k, b, q}(n).$
\begin{theorem}
For every positive integer $n,n\ge b\ge 1$:
$$f_{k, b, q}(n) = f_{k, b, q}\left(\left\lfloor{ \frac{n-b\left\langle k^{q(m + 1)} \right\rangle}{k^{q(m + 1)}}}\right\rfloor\right)+ \sum_{i = 0}^m\left\lceil{\frac{k - 1}{k}\left\lfloor{\frac{n-b\left\langle k^{qi} \right\rangle}{k^{qi}}}\right\rfloor} + \frac{b}{k}\right\rceil,$$
where $m = \left\lfloor{\frac{\log_k \left((n(k-1)+b)/bk\right)}{q}}\right\rfloor$. Similarly,
$$g_{k, b, q}(n) = g_{k, b, q}\left(\left\lfloor{ \frac{n-b\left\langle k^{(2q-1)(m + 1)}\right \rangle}{k^{(2q-1)(m + 1)}}}\right\rfloor\right)+ \sum_{i = 0}^m\left\lceil{\frac{k - 1}{k}\left\lfloor{\frac{n-b\left\langle k^{(2q-1)i}\right\rangle}{k^{(2q-1)i}}}\right\rfloor} + \frac{b}{k}\right\rceil,$$
where $m = \left\lfloor{\frac{\log_k \left((n(k-1)+b)/bk\right)}{2q-1}}\right\rfloor$. 
\end{theorem}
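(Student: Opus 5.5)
The plan is to unroll the recurrence of Theorem 2.1 exactly $m+1$ times. Write $\phi(x)=\left\lfloor (x-b\langle k^q\rangle)/k^q\right\rfloor$, the argument map appearing in the recurrence for $f_{k,b,q}$, and set $n_i=\left\lfloor (n-b\langle k^{qi}\rangle)/k^{qi}\right\rfloor$ for $i\ge 0$. The first step is a composition lemma: $\phi(n_i)=n_{i+1}$, i.e.\ $\phi^{i}(n)=n_i$.

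To prove the lemma, note that $p^{-j}(x)=(x-b\langle k^j\rangle)/k^j$. Hence $(n_i-b\langle k^q\rangle)/k^q$ is $p^{-q}$ applied to $n_i=\lfloor p^{-qi}(n)\rfloor$. Then use the standard nested-floor identity $\lfloor(\lfloor y\rfloor-c)/d\rfloor=\lfloor (y-c)/d\rfloor$, valid for integers $c$ and positive integers $d$. Together with $p^{-q}\circ p^{-qi}=p^{-q(i+1)}$, i.e.\ $\langle k^q\rangle+k^q\langle k^{qi}\rangle=\langle k^{q(i+1)}\rangle$, this gives the claim. I would verify this algebraic identity directly from $\langle k^j\rangle=(k^j-1)/(k-1)$.

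The second step is to rewrite each summand. By Theorem 2.1, $f_{k,b,q}(n_i)=\lceil (n_i(k-1)+b)/k\rceil+f_{k,b,q}(n_{i+1})$ whenever $n_i>b$. Since $(n_i(k-1)+b)/k=\frac{k-1}{k}n_i+\frac bk$, this is exactly the $i$-th summand in the statement. Iterating from $i=0$ to $i=m$ gives
$$f_{k,b,q}(n)=f_{k,b,q}(n_{m+1})+\sum_{i=0}^m\left\lceil \tfrac{k-1}{k}n_i+\tfrac bk\right\rceil,$$
provided the recurrence is applicable at each of these steps.

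The main obstacle is this last point: justifying the choice of $m$, and in particular checking that the recurrence may legitimately be applied at every stage $i\le m$. The recurrence is stated for $n_i>b$. At the stage where $n_i\le b$, one instead has $f_{k,b,q}(n_i)=n_i$, and this must agree with the formula. Thus I would show two things: $n_i\ge b$ for $i\le m$, and for any $i$ with $0\le n_i\le b$ the identity $n_i=\lceil \frac{k-1}{k}n_i+\frac bk\rceil+f_{k,b,q}(n_{i+1})$ still holds. The latter would follow from $n_{i+1}<0$ (so $f_{k,b,q}(n_{i+1})=0$) together with $\lceil (n_i(k-1)+b)/k\rceil=n_i$ when $n_i=b$.

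For the first condition, $n_i\ge b$ iff $n-b\langle k^{qi}\rangle\ge bk^{qi}$. This is equivalent to $n(k-1)+b\ge b k^{qi+1}$, i.e.\ $qi\le \log_k\!\big((n(k-1)+b)/bk\big)$, which holds precisely for $i\le m$; this is where the hypothesis $b\ge1$ is needed so that the logarithm makes sense. If the boundary case $n_i=b$ is in fact the only one beyond the strict range, the extra consistency check above handles it, and the tail term $f_{k,b,q}(n_{m+1})$ absorbs whatever remains.

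The formula for $g_{k,b,q}$ is identical with $q$ replaced by $2q-1$ throughout, both in the composition lemma and in the definition of $m$.
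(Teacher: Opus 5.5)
Your proposal is correct and follows the paper's proof. Like the paper, you unroll the recurrence of Theorem 2.1 $m+1$ times and use the nested-floor identity $\lfloor(\lfloor y\rfloor-c)/d\rfloor=\lfloor(y-c)/d\rfloor$ to identify the iterated arguments with $\lfloor (n-b\langle k^{qi}\rangle)/k^{qi}\rfloor$. Your characterisation $n_i\ge b\iff i\le m$, together with the check at $n_i=b$ (where $n_{i+1}<0$ since $\langle k^q\rangle>1$), handles the boundary more carefully than the paper's ``until $n_i\le b$''. One small correction: the identity $n_i=\lceil\frac{k-1}{k}n_i+\frac bk\rceil+f_{k,b,q}(n_{i+1})$ is false for $0\le n_i<b$, because the ceiling already exceeds $n_i$. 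You never need it there, though, since every unrolled stage $i\le m$ has $n_i\ge b$.
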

It should be remarked that, by Theorem 2.1,  $f_{k, b, q}\left(\left\lfloor{ \frac{n-b\left\langle k^{q(m + 1)} \right\rangle}{k^{q(m + 1)}}}\right\rfloor\right)$
is either $0$ or $\left\lfloor{ \frac{n-b\left\langle k^{q(m + 1)} \right\rangle}{k^{q(m + 1)}}}\right\rfloor,$ depending on whether $\left\lfloor{ \frac{n-b\left\langle k^{q(m + 1)} \right\rangle}{k^{q(m + 1)}}}\right\rfloor$ is negative or not, and $g_{k, b, q}\left(\left\lfloor{ \frac{n-b\left\langle k^{(2q-1)(m + 1)}\right \rangle}{k^{(2q-1)(m + 1)}}}\right\rfloor\right)$ is determined correspondingly.

\noindent\emph{Proof.} For $f_{k,b,q} (n)$, this is obtained by repeatedly applying Theorem 2.1 until $\left\lfloor{ \frac{n-b\left\langle k^{qi}\right \rangle}{k^{qi}}}\right\rfloor\le b$, i.e., $i\ge\left\lfloor{\frac{\log_k \left((n(k-1)+b)/bk\right)}{q}}\right\rfloor.$ Also, notice that
$$\left\lfloor{\frac{\left\lfloor{\frac{n-b\left\langle k^{qi} \right\rangle}{k^{qi}}}\right\rfloor-b\left\langle k^{q} \right\rangle}{k^{q}} }\right\rfloor=\left\lfloor{\frac{n-b\left\langle k^{q(i+1)} \right\rangle}{k^{q(i+1)}}}\right\rfloor.$$
The same argument can be applied to deduce the formula for $g_{k,b,q} (n)$.
$\hfill\square$

In Theorem 2.2, when $q=2$, formulae for $f_{k,b} (n)$ and $g_{k,b} (n)$ are obtained, which are, however, different from those of You, Yu, and Liu \cite{b6}. Particularly, You, Yu, and Liu suggested that
$$f_{k,b}(n)=\sum_{i=1}^{n(1)}|N_i|\left\lceil{\frac{i+1}{2}}\right\rceil \hspace{0.5cm}\text{and}\hspace{0.5cm}g_{k,b}(n)=\sum_{i=1}^{n(1)}|N_i|\left\lceil{\frac{i+1}{3}}\right\rceil,$$
where $n(1)=\left\lfloor{\log_k\frac{n+b/(k-1)}{1+b/(k-1)}}\right\rfloor$ and 
$$  |N_i| = \begin{cases} \left\lfloor{\frac{n-b\left\langle k^{i} \right\rangle}{k^{i}}}\right\rfloor-2\left\lfloor{\frac{n-b\left\langle k^{i+1} \right\rangle}{k^{i+1}}}\right\rfloor+\left\lfloor{ \frac{n-b\left\langle k^{i+2}\right \rangle}{k^{i+2}}}\right\rfloor & \text{for } i< n(1), \\ \left\lfloor{\frac{n-b\left\langle k^{i} \right\rangle}{k^{i}}}\right\rfloor &\text{for } i = n(1). \end{cases}  $$
These are essentially different from Theorem 2.2 as the two have different strategies: our approach utilises recurrence relations while You, Yu, and Liu partition $[n]$ from the beginning. Furthermore, it seems that their approach can be suitably modified to accommodate for the more general cases when $q\ge 3.$

These somewhat complicated formulae are illustrated in the following example:

\noindent\textbf{Example 1.} To determine $f_{2,1,3} (2021)$ and $g_{2,1,3} (2021)$, Theorem 2.2 gives:
$$f_{2, 1, 3}(2021) = f_{2, 1, 3}\left(\left\lfloor{\frac{2021}{8^{4}} - \frac{8^4-1}{8^4}}\right\rfloor\right)+ \sum_{i = 0}^3\left\lceil{\frac{1}{2}\left\lfloor{\frac{2021}{8^{i}} - \frac{8^{i} - 1}{8^{i}}}\right\rfloor} + \frac{1}{2}\right\rceil=1155$$
and
$$g_{2, 1, 3}(2021) = g_{2, 1, 3}\left(\left\lfloor{\frac{2021}{32^{2}} - \frac{32^2-1}{32^2}}\right\rfloor\right)+ \sum_{i = 0}^1\left\lceil{\frac{1}{2}\left\lfloor{\frac{2021}{32^{i}} - \frac{32^{i} - 1}{32^{i}}}\right\rfloor} + \frac{1}{2}\right\rceil=1043$$

\begin{corollary}
$$f_{k,0, q}(n) = \frac{k^{q - 1}(k - 1)}{k^q - 1}n + O(\log n).$$

\end{corollary}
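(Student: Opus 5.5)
With $b=0$ the recurrence of Theorem 2.1 takes the simple form
$$f_{k,0,q}(n)=\left\lceil \frac{(k-1)n}{k}\right\rceil+f_{k,0,q}\left(\left\lfloor \frac{n}{k^q}\right\rfloor\right),$$
with $f_{k,0,q}(0)=0$. Theorem 2.2 assumes $b\ge 1$, so I will not use it. Instead I will unroll this recurrence by hand. Write $n_i=\lfloor n/k^{qi}\rfloor$. Since $\lfloor \lfloor x\rfloor/m\rfloor=\lfloor x/m\rfloor$ for positive integers $m$, we have $n_{i+1}=\lfloor n_i/k^q\rfloor$. Iterating until $n_{m+1}=0$ gives
$$f_{k,0,q}(n)=\sum_{i=0}^{m}\left\lceil \frac{(k-1)n_i}{k}\right\rceil,\qquad m=\lfloor \log_{k^q} n\rfloor,$$
so the sum has $O(\log n)$ terms.

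Next I will replace each term by its real-valued counterpart. Since $\lceil x\rceil = x+O(1)$ and $n_i=n/k^{qi}+O(1)$, each summand equals $\frac{k-1}{k}\cdot\frac{n}{k^{qi}}+O(1)$, where the $O(1)$ depends only on $k$. Summing over $m+1=O(\log n)$ terms, the accumulated error is $O(\log n)$. The main term is
$$\frac{k-1}{k}\,n\sum_{i=0}^{m}k^{-qi}.$$
Extending the geometric sum to infinity costs
$$\frac{k-1}{k}\,n\sum_{i>m}k^{-qi}=O\!\left(n\,k^{-q(m+1)}\right)=O(1),$$
because $k^{q(m+1)}>n$ by the choice of $m$.

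Finally,
$$\sum_{i\ge0}k^{-qi}=\frac{k^q}{k^q-1},$$
so the main term is
$$\frac{k-1}{k}\cdot\frac{k^q}{k^q-1}\,n=\frac{k^{q-1}(k-1)}{k^q-1}\,n,$$
which gives the stated asymptotic.

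The only step needing care is the identity $n_{i+1}=\lfloor n_i/k^q\rfloor$. It is standard, and it is exactly the $b=0$ case of the floor identity noted in the proof of Theorem 2.2. Nothing else is a real obstacle; the key point is simply that every rounding error is uniformly bounded and occurs only $O(\log n)$ times.
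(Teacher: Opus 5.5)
Your proof is correct and follows the paper's route. The paper also reduces to the explicit sum $f_{k,0,q}(n)=\sum_{i\ge 0}\left\lceil \frac{k-1}{k}\left\lfloor n/k^{qi}\right\rfloor\right\rceil$ and sums the geometric series, except that it cites Theorem 2.2 for this sum, whereas you unroll the $b=0$ recurrence of Theorem 2.1 directly and make the $O(1)$-per-term error accounting explicit — rightly, since Theorem 2.2 is stated only for $b\ge 1$ and its $m$ divides by $b$.
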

This is based on 
$$f_{k,0, q}(n) = \sum_{i \ge 0}\left\lceil{\frac{k - 1}{k}\left\lfloor{\frac{n}{k^{qi}}}\right\rfloor}\right\rceil,$$
which is a direct corollary of Theorem 2.2. In fact, Corollary 2.3 can be extended by generalising the method of Wakeham and Wood \cite{b2}. Particularly, for positive integers $a,b$ whose greatest common divisor is $g$ and $a<b$, one has 
$$f_{b/a,0,q}(n) = \frac{b^{q - 1}(b - g)}{b^q - g^q}n + O(\log n).$$

Next, still in the case $b=0,$ if $n$ is a perfect power of $k$ then a much neater formula for $f_{k,0,q}(n)$ is as follows:
\begin{theorem}
$$f_{k,0, q}(k^n) = \left\lfloor{\frac{k^{q - 1}(k^n - 1)(k - 1)}{k^q - 1}}\right\rfloor + 1.$$
\end{theorem}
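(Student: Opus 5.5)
We use the $b=0$ case of Theorem 2.2, recorded after Corollary 2.3:
$$f_{k,0,q}(N)=\sum_{i\ge 0}\left\lceil \frac{k-1}{k}\left\lfloor \frac{N}{k^{qi}}\right\rfloor\right\rceil .$$
The plan is to evaluate this sum exactly at $N=k^n$ and then recognise the result as the stated floor expression. Write $n=qs+r$ with $0\le r<q$.

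\textbf{Step 1: evaluate the terms.} For $0\le i\le s$ we have $\lfloor k^n/k^{qi}\rfloor=k^{n-qi}$. If $n-qi\ge 1$, the summand is exactly $(k-1)k^{n-qi-1}$, since no ceiling is needed. The only delicate term is $n-qi=0$, which happens exactly when $r=0$ and $i=s$; there the summand is $\lceil (k-1)/k\rceil=1$. For $i>s$ the floor vanishes. Hence:
\begin{itemize}
\item if $r\ge 1$, then $f=\sum_{i=0}^{s}(k-1)k^{n-1-qi}$;
\item if $r=0$, then $f=1+\sum_{i=0}^{s-1}(k-1)k^{n-1-qi}$.
\end{itemize}

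\textbf{Step 2: sum the geometric series.} In both cases the sum over the indices with $n-qi\ge1$ equals
$$(k-1)k^{n-1}\cdot\frac{1-k^{-qt}}{1-k^{-q}}=\frac{(k-1)\left(k^{n-1+q}-k^{n-1+q-qt}\right)}{k^q-1},$$
where $t$ is the number of such indices: $t=s+1$ if $r\ge1$, and $t=s$ if $r=0$. Then $n-qt=r-q$ when $r\ge 1$, and $n-qt=0$ when $r=0$. Writing $e=q-1+n-qt$, the sum is
$$\frac{(k-1)\left(k^{n+q-1}-k^{e}\right)}{k^q-1}.$$
Here $e=r-1\in[0,q-2]$ when $r\ge1$, and $e=q-1$ when $r=0$.

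\textbf{Step 3: match the target.} The target value is
$$\left\lfloor \frac{(k-1)k^{q-1}(k^n-1)}{k^q-1}\right\rfloor+1=\left\lfloor \frac{(k-1)\left(k^{n+q-1}-k^{q-1}\right)}{k^q-1}\right\rfloor+1.$$
\begin{itemize}
\item \textbf{Case $r=0$.} Here $e=q-1$, so the expression inside the floor coincides with our sum, which is an integer. Adding the extra $1$ matches exactly.
\item \textbf{Case $r\ge 1$.} The computed value $V$ exceeds the quantity inside the floor by
$$\frac{(k-1)\left(k^{q-1}-k^{r-1}\right)}{k^q-1}.$$
So we need this difference $\delta$ to satisfy $\lfloor V-\delta\rfloor+1=V$, that is $0<\delta\le 1$. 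Positivity holds because $r-1<q-1$. For the upper bound, $(k-1)(k^{q-1}-k^{r-1})<(k-1)k^{q-1}<k^q-1$. Since $V$ is an integer (a sum of integers), the identity follows.
\end{itemize}

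The main obstacle is bookkeeping rather than an idea. One must isolate the single term where the ceiling matters ($n$ divisible by $q$) and check the bound $0<\delta\le1$ in the other residue classes. Separately, one should verify that the formula from Theorem 2.2 genuinely applies for $b=0$; Theorem 2.2 is stated for $b\ge1$. If that is in doubt, I would instead iterate Theorem 2.1 directly with $b=0$. This gives the same sum, since $\langle k^i\rangle$ drops out and the base case is $f(0)=0$.
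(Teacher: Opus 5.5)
Your proof is correct and is essentially the paper's argument. The paper inducts on $n$ in steps of $q$, using $f_{k,0,q}(k^n)=(k-1)k^{n-1}+f_{k,0,q}(k^{n-q})$ from Theorem 2.1 with $b=0$, and notes that the closed form at $n$ exceeds the one at $n-q$ by exactly the integer $(k-1)k^{n-1}$; this is your geometric sum unrolled one step at a time. Your check $0<\delta\le 1$ is precisely the base case $1\le n<q$ that the paper asserts without detail, and you were right to iterate Theorem 2.1 directly with $b=0$ rather than cite Theorem 2.2, which is stated only for $b\ge 1$.
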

\noindent\emph{Proof.}
We will induct on $n$. First, let $0\le n<q$. The theorem is true for $n=0$ as $f(1)=1$. If $1\le n<q$, Theorem 2.1 suggests
$$f_{k,0, q}(k^n) = \left\lceil{\frac{k - 1}{k}k^n}\right\rceil + f_{k, 0,q}(0) = k^{n - 1}(k - 1).$$
We can then prove that, for $0\le n<q$:
$$f_{k, 0,q}(k^n) = \left\lfloor{\frac{k^{q - 1}(k^n - 1)(k - 1)}{k^q - 1}}\right\rfloor + 1.$$
Lastly, the inductive step from $n$ to $n+q$ can be done as follows
$$\left\lfloor{\frac{k^{q - 1}(k^n - 1)(k - 1)}{k^q - 1}}\right\rfloor + 1 = \left\lfloor{\frac{k^{q - 1}(k^{n - q} - 1)(k - 1)}{k^q - 1}}\right\rfloor + 1 + (k - 1)k^{n - 1}.$$
This completes the proof. $\hfill\square$

There is an alternative to determine $f_{k,0,q} (n)$. Partition $[n]$ into geometric progressions of the form $\{t,kt,k^2 t,\dots,k^j t\}$, where $t$ is not divisible by $k$ and $1\le t\le n$. Each such subset’s cardinality is $\left\lfloor{\log_k(n / t)}\right\rfloor + 1.$ Within each subset, if its cardinality is $M$, then at most $\left\lceil{M/q}\right\rceil$ elements can belong to the same maximal $(k,0,q)$-linear-free subset of $[n]$. Therefore,
$$f_{k,0, q}(n) = \sum_{\substack{{1\le t\le n} \\ {k\nmid t}}}\left\lceil{\frac{\left\lfloor{\log_k(n / t)}\right\rfloor + 1}{q}}\right\rceil.$$
Replacing $n$ with $k^{n-1}$, we obtain a result in The American Mathematical Monthly \cite{b7}:
\begin{quote}
    Let $k,q$ and $n$ be positive integers with $k\ge 2$, and let $P$ be the set of all positive integers less than $k^n$ that are not divisible by $k$. Prove
$$\sum_{p \in P}\left\lceil{\frac{\left\lfloor{n - \log_k p}\right\rfloor}{q}}\right\rceil = \left\lfloor{\frac{k^{q - 1}(k^{n - 1} - 1)(k - 1)}{k^q - 1}}\right\rfloor + 1.$$
\end{quote}

Next, inspired by theorem 2 in \cite{b6}, we investigate the possible cardinalities of maximal $(k,b,q)$-linear-free subsets. The following lemma is crucial to the proof of Theorem 2.6.

\begin{lemma}
A set $S$ is said to be \emph{$q$-distanced} if for all $x,y\in S,x\neq y$, then $|x-y|\ge q$. Let the set $H_q (n)$ include every subset $S$ of $[n]$ such that: $S$ is $q$-distanced but $S\cup \{t\}, \forall t\in [n]\backslash S$ is not. Then, there exists a set $A\in H_q (n)$ whose cardinality is $x$ if and only if $\left\lceil{\frac{n}{q}}\right\rceil \ge x \ge \left\lceil{\frac{n}{2q - 1}}\right\rceil$.
\end{lemma}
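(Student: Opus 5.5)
We reduce everything to gaps between consecutive elements. Write a $q$-distanced set $S\subseteq[n]$ as $s_1<s_2<\dots<s_x$. Maximality means that no $t\in[n]\setminus S$ can be added, i.e.\ every such $t$ lies within distance $q-1$ of some element of $S$. Translated into gaps, $S\in H_q(n)$ if and only if
\begin{itemize}
\item[(i)] $q\le s_{j+1}-s_j\le 2q-1$ for $1\le j<x$;
\item[(ii)] $1\le s_1\le q$;
\item[(iii)] $n-q+1\le s_x\le n$.
\end{itemize}
For the gap condition: if $s_{j+1}-s_j\ge 2q$, then $t=s_j+q$ is at distance at least $q$ from both neighbours, hence from all of $S$. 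Conversely, if the gap is at most $2q-1$, every $t$ strictly between $s_j$ and $s_{j+1}$ is within $q-1$ of one of them. The end conditions follow the same way: if $s_1\ge q+1$ then $t=1$ can be added, and symmetrically at $n$. I would state and verify this characterization first, since both directions of the lemma then become arithmetic.

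\emph{Necessity.} From (i)--(iii),
$$n-q+1\le s_x=s_1+\sum_{j<x}(s_{j+1}-s_j)\le q+(x-1)(2q-1),$$
so $x(2q-1)\ge n-q+1-q+2q-1=n$, giving $x\ge\lceil n/(2q-1)\rceil$. For the upper bound, $q$-distancedness alone gives $1+(x-1)q\le s_x\le n$, so $x\le\lfloor (n-1)/q\rfloor+1=\lceil n/q\rceil$. One should also note that $H_q(n)$ is nonempty, but that is immediate.

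\emph{Sufficiency.} Fix $x$ with $\lceil n/(2q-1)\rceil\le x\le\lceil n/q\rceil$. Take $s_1=1$ and choose gaps $d_1,\dots,d_{x-1}\in[q,2q-1]$ whose sum $D$ satisfies $n-q\le D\le n-1$, so that (iii) holds with $s_x=1+D$. The attainable sums are exactly the integers in $[(x-1)q,(x-1)(2q-1)]$, since each gap can be increased by $1$ at a time. It therefore suffices that this interval meets $[n-q,n-1]$. Two inequalities give this. First, $(x-1)q\le n-1$, which follows from $x\le\lceil n/q\rceil$. Second, $(x-1)(2q-1)\ge n-q$, which follows from $x(2q-1)\ge n$, because then $(x-1)(2q-1)\ge n-2q+1$.

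The second inequality is not quite enough: it needs $n-2q+1\ge n-q$, which fails by $q-1$. I expect this boundary step to be the main obstacle. The fix is to also let $s_1$ range over $[1,q]$. Then we need $s_1+D\in[n-q+1,n]$ with $s_1\le q$ and $D\le (x-1)(2q-1)$, and
$$\max(s_1+D)=q+(x-1)(2q-1)\ge n-q+1\iff x(2q-1)\ge n,$$
which is exactly our hypothesis. The minimum, $1+(x-1)q\le n$, is also guaranteed. Since $s_1+D$ takes every integer value between these extremes, some choice lands in $[n-q+1,n]$. By the characterization this produces a set in $H_q(n)$ of size exactly $x$.

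The edge case $x=1$ has no gaps, so condition (i) is vacuous and we simply need some $s_1\in[1,q]\cap[n-q+1,n]$. This interval intersection is nonempty precisely when $n\le 2q-1$, which matches $\lceil n/(2q-1)\rceil=1$.
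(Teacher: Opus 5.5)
Your proof is correct, and it takes a different route from the paper.

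\textbf{The paper's route.} The paper never characterizes $H_q(n)$. It fixes a split point $a$ and takes the residues $1 \bmod q$ on $[1,a]$ and the residues $a+q \bmod (2q-1)$ on $[a+1,n]$. It asserts that the resulting set lies in $H_q(n)$ and has $\lceil a/q\rceil+\lceil (n-a)/(2q-1)\rceil$ elements. It then restricts to $a=(2q-1)k$, so the count becomes $k+\lceil -k/q\rceil+\lceil n/(2q-1)\rceil$, which rises in steps of $0$ or $1$. What this buys is an explicit one-parameter family (dense packing on a prefix, sparse covering on a suffix). That is the shape one wants when adapting the You--Yu--Liu argument for Theorem 2.6.

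\textbf{What your route adds.} Your gap characterization (i)--(iii) supplies two things the paper's argument lacks.
\begin{itemize}
\item \emph{Necessity.} The paper's ``it suffices'' only concerns its own family. Your two gap sums show that every member of $H_q(n)$ has size between $\lceil n/(2q-1)\rceil$ and $\lceil n/q\rceil$.
\item \emph{The right endpoint.} The paper's count can fail at the right end. For $q=2$, $n=4$, the choice $a=0$ yields $\{2\}$, which is not maximal because $4$ can be added. The choice $a=3$ yields $\{1,3\}$, while the formula claims $3$ elements, more than $\lceil 4/2\rceil$.
\end{itemize}
Your device handles exactly this boundary problem: let $s_1$ float in $[1,q]$, require $s_x\in[n-q+1,n]$, and use that the sumset of two integer intervals is an interval. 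This is precisely the obstacle you noticed with $s_1=1$.

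\textbf{A minor point.} Your characterization needs that members of $H_q(n)$ are nonempty when $n\ge 1$, so that $s_1$ and $s_x$ exist. You stated instead that $H_q(n)$ itself is nonempty. Both facts are immediate.
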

\noindent\emph{Proof.}
Let $a$ be an integer in $[0,n]$. From $a$ numbers from $1$ to $a$, pick all the numbers which are congruent to $1$ modulo $q$ to be elements of $S$ (initially, $S$ is empty).  For the other $n-a$ numbers, pick all the numbers which are congruent to $a+q$ modulo $2q-1$ to be in $S$. Then $S\in H_q (n)$ and its cardinality is $\left\lceil{\frac{a}{q}}\right\rceil+\left\lceil{\frac{n-a}{2q-1}}\right\rceil.$ It suffices to prove that there exists an integer $a\in [0,n]$ such that $\left\lceil{\frac{a}{q}}\right\rceil+\left\lceil{\frac{n-a}{2q-1}}\right\rceil=x$ if and only if $\left\lceil{\frac{n}{q}}\right\rceil \ge x \ge \left\lceil{\frac{n}{2q - 1}}\right\rceil$.

Indeed, consider $a$ to be of the form $(2q-1)k$, where $k$ is a non-negative integer. Let the function $s:\left[0,\left\lfloor{\frac{n}{2q-1}}\right\rfloor\right]\longrightarrow \mathbb{N}$ be
$$s(k)=\left\lceil{\frac{(2q-1)k}{q}}\right\rceil+\left\lceil{\frac{n-(2q-1)k}{2q-1}}\right\rceil=k+\left\lceil{\frac{-k}{q}}\right\rceil+\left\lceil{\frac{n}{2q-1}}\right\rceil.$$
Then 
$$s(k+1)-s(k)=\left(1+\left\lceil{\frac{-k-1}{q}}\right\rceil-\left\lceil{\frac{-k}{q}}\right\rceil\right)\in\{0,1\}.$$
Hence, $s$ is a non-decreasing function and each of its jump is exactly $1$ unit. Furthermore, $s(0)=\left\lceil{\frac{n}{2q-1}}\right\rceil,\left\lceil{\frac{n}{q}}\right\rceil+\left\lceil{\frac{n-n}{2q-1}}\right\rceil=\left\lceil{\frac{n}{q}}\right\rceil$ and
$$s\left(\left\lfloor{\frac{n}{2q-1}}\right\rfloor\right)=\left\lfloor{\frac{n}{2q-1}}\right\rfloor+\left\lceil{\frac{-\left\lfloor{\frac{n}{2q-1}}\right\rfloor}{q}}\right\rceil+\left\lceil{\frac{n}{2q-1}}\right\rceil.$$
Hence,
$$s\left(\left\lfloor{\frac{n}{2q-1}}\right\rfloor\right)\ge \frac{n}{2q-1}-1-\frac{n}{(2q-1)q}+\frac{n}{2q-1}=\frac{n}{q}-1$$
$$\Longrightarrow s\left(\left\lfloor{\frac{n}{2q-1}}\right\rfloor\right)\ge\left\lceil{\frac{n}{q}}\right\rceil-1.$$
This shows that $\left\lceil{\frac{a}{q}}\right\rceil+\left\lceil{\frac{n-a}{2q-1}}\right\rceil$ can take any integral values between $\left\lceil{\frac{n}{q}}\right\rceil$ and $\left\lceil{\frac{n}{2q-1}}\right\rceil$, inclusively, completing the proof.
 $\hfill\blacksquare$

This leads to 
\begin{theorem}
For any integer $x$ in $\left[g_{k,b,q} (n),f_{k,b,q} (n)\right]$, there is at least one maximal $(k,b,q)$-linear-free subset of $[n]$ containing exactly $x$ elements.
\end{theorem}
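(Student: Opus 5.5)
The plan is to decompose $[n]$ into chains under $p$ and reduce the statement to Lemma 2.5 applied chain by chain. For each $t\in[n]$ with $t\notin p(\mathbb{Z})$, or with $p^{-1}(t)<1$ or non-integral, consider the maximal chain
$$C_t=\{t,p(t),p^2(t),\dots,p^{j}(t)\}\subseteq[n].$$
These chains partition $[n]$. The $(k,b,q)$-linear-free condition only relates elements of the same chain, since $p^i(x)$ lies in the chain of $x$. Moreover, a pair $p^a(t),p^c(t)$ is forbidden exactly when $1\le |a-c|\le q-1$. Identifying $C_t$ with $[|C_t|]$ via $p^a(t)\mapsto a+1$, a subset $S\subseteq[n]$ is $(k,b,q)$-linear-free if and only if each $S\cap C_t$ is $q$-distanced in $[|C_t|]$.

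Maximality also decouples. Adding $u\in C_t\setminus S$ can only create a conflict inside $C_t$. Hence $S$ is maximal in $[n]$ if and only if, for every $t$, the image of $S\cap C_t$ lies in $H_q(|C_t|)$. Therefore
$$M_{k,b,q}(n)=\Big\{\textstyle\bigcup_t S_t : S_t \text{ corresponds to an element of } H_q(|C_t|)\Big\}.$$

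By Lemma 2.5, the possible sizes of $S_t$ are exactly the integers in $\left[\left\lceil |C_t|/(2q-1)\right\rceil,\left\lceil |C_t|/q\right\rceil\right]$, an interval with no gaps. Summing over chains, the set of achievable $|S|$ is the Minkowski sum of integer intervals, which is again an integer interval. Its endpoints are
$$\sum_t \left\lceil \frac{|C_t|}{2q-1}\right\rceil \quad\text{and}\quad \sum_t \left\lceil \frac{|C_t|}{q}\right\rceil .$$
The left endpoint is the minimum over $M_{k,b,q}(n)$, namely $g_{k,b,q}(n)$, and the right endpoint is $f_{k,b,q}(n)$. This holds by definition, since every maximal set arises this way, so no comparison with the formulas of Theorem 2.1 is needed. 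Explicitly, to hit a given $x$, start with the minimal choice on every chain and raise the chain sizes one unit at a time, one chain after another. Each step stays within the allowed interval for that chain, so every intermediate total is realised.

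The main obstacle is the chain decomposition when $b>0$. I must check that $p$ is injective, that the backward step $p^{-1}(y)=(y-b)/k$ gives a unique predecessor whenever it is a positive integer, and that chains are genuinely disjoint paths with no branching. This holds since $p$ is injective and strictly increasing on positive integers, with $p(x)>x$. I must also confirm that the maximality condition for $S\cup\{u\}$ is local to the chain of $u$, so that restricted maximality is equivalent to membership in $H_q$. I also need the degenerate case of a chain of length $1$, which forces $|S_t|=1$; this is consistent with Lemma 2.5 for $n=1$.
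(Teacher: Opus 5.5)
Your proof is correct and is essentially the argument the paper intends: its one-line proof defers to the chain decomposition of You, Yu and Liu combined with Lemma 2.5. That is your reduction to maximal $q$-distanced subsets of each $p$-chain, followed by summing the per-chain size intervals, and you supply the details the paper leaves as ``obvious modifications'' (both the linear-free condition and maximality decouple across chains).
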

\noindent\emph{Proof.}
Equipped with Lemma 2.5, one can proceed by making obvious modifications to the proof in \cite{b6}.   $\hfill\square$

Lastly, we determine values of $n$ for which the lower bound and upper bound of the cardinalities of $(k,b,q)$-linear-free subsets of $[n]$ are, in fact, identical.  
\begin{theorem}
$g_{k,b,q} (n)=f_{k,b,q}(n)$ if and only if $n<p^q (1)$.
\end{theorem}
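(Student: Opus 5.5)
The plan is to use only the recurrences of Theorem 2.1 and to compare the two tail terms. For $j\ge 1$ write
$$n_j=\left\lfloor{\frac{n-b\left\langle k^{j}\right\rangle}{k^{j}}}\right\rfloor .$$
The first observation is that $p^j(1)=k^j+b\left\langle k^j\right\rangle$. Hence $n_j\ge 1$ if and only if $n\ge p^j(1)$, and $n_j\le 0$ if and only if $n<p^j(1)$. Since $p$ is increasing and $p(x)>x$, we have $p^q(1)<p^{2q-1}(1)$. Also $n_{2q-1}\le n_q$, because $n_{2q-1}$ is obtained from $n_q$ by applying the map $m\mapsto\lfloor (m-b\langle k^{q-1}\rangle)/k^{q-1}\rfloor$, which does not increase $m\ge 0$ (the same identity used in the proof of Theorem 2.2).

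First I dispose of the degenerate range $n\le b$. There $f_{k,b,q}(n)=g_{k,b,q}(n)=n$ by the initial conditions, and $n\le b<p^q(1)$, so the equivalence holds.

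For $n>b$, both recurrences in Theorem 2.1 have the same leading term $\left\lceil{\frac{n(k-1)+b}{k}}\right\rceil$. Subtracting them gives
$$f_{k,b,q}(n)-g_{k,b,q}(n)=f_{k,b,q}(n_q)-g_{k,b,q}(n_{2q-1}).$$

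\emph{If direction.} Suppose $n<p^q(1)$. Then $n_q\le 0$, and hence $n_{2q-1}\le 0$ as well. By the initial conditions both tail terms equal $0$: they are $0$ for negative arguments and $f(0)=g(0)=0$. So $f_{k,b,q}(n)=g_{k,b,q}(n)$.

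\emph{Only if direction.} Suppose $n\ge p^q(1)$, so $m:=n_q\ge 1$. I need $f_{k,b,q}(m)>g_{k,b,q}(n_{2q-1})$, and I split into two cases.

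\textbf{Case $n_{2q-1}\le 0$.} Then $g_{k,b,q}(n_{2q-1})=0$. On the other hand $f_{k,b,q}(m)\ge 1$ for $m\ge 1$: either $m\le b$ and $f(m)=m$, or the leading term of Theorem 2.1 is positive. So the difference is positive.

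\textbf{Case $n_{2q-1}\ge 1$.} I use the crude bound $g_{k,b,q}(n_{2q-1})\le n_{2q-1}$, valid since any subset of $[n_{2q-1}]$ has at most $n_{2q-1}$ elements. Next I unwind $n_{2q-1}$ from $m$, which gives
$$n_{2q-1}\le \frac{m-b\langle k^{q-1}\rangle}{k^{q-1}}\le \frac{m}{k^{q-1}}\le \frac{m}{k}.$$
For the other side I use the lower bound $f_{k,b,q}(m)\ge \min\left\{m,\left\lceil{\frac{m(k-1)+b}{k}}\right\rceil\right\}$, which follows from Theorem 2.1 together with the initial conditions. This gives $f_{k,b,q}(m)\ge \frac{(k-1)m}{k}$. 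Strictness is then the inequality $\frac{(k-1)m}{k}>\frac{m}{k}$, which holds when $k\ge 3$. When $k=2$ it requires a little more care: I would use the sharper estimate $n_{2q-1}\le m/k^{q-1}-\langle k^{q-1}\rangle b/k^{q-1}$, or the ceiling in $\left\lceil m/2\right\rceil$, together with $q\ge 2$.

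The main obstacle is exactly this boundary bookkeeping for $k=2$ and $b=0$, where the inequality chain becomes tight. The worst subcase is $q=2$, where one needs $\lceil m/2\rceil>\lfloor m/2\rfloor$ or some slack from the floors. If the naive estimate fails there, I would instead compare $f(m)\ge\lceil m/2\rceil$ with $g(n_{2q-1})\le f(n_{2q-1})$. I would then apply Theorem 2.1 once more to $f(n_{2q-1})$ and argue by induction on $n$ that $f$ is strictly increasing along $m\mapsto n_{q-1}$-type contractions. As a last resort, I would use the chain decomposition: the chain $1,p(1),\dots$ has length at least $q+1$ in $[n]$, and on that chain Lemma 2.5 gives different extreme sizes, $2$ versus $1$, when the length is $q+1$.
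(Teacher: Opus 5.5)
Subtracting the two recurrences of Theorem 2.1 to get $f_{k,b,q}(n)-g_{k,b,q}(n)=f_{k,b,q}(n_q)-g_{k,b,q}(n_{2q-1})$ is a sound reduction. It is a different and cleaner route than the paper's, which uses the increment rule ($f$ rises at $n+1$ iff $q\mid h(n+1)$, $g$ iff $(2q-1)\mid h(n+1)$) and then simply asserts that $f$ ``grows evidently faster''. Your ``if'' direction is correct, and so is the case $n_{2q-1}\le 0$. In the case $n_{2q-1}\ge 1$, comparing $g(n_{2q-1})\le n_{2q-1}\le (m-b\langle k^{q-1}\rangle)/k^{q-1}$ with $f(m)\ge\min\{m,\lceil (m(k-1)+b)/k\rceil\}$ gives a strict inequality in three situations:
\begin{itemize}
\item $k\ge 3$;
\item $k=2$, $q\ge 3$, since then $n_{2q-1}\le m/4<m/2$;
\item $k=2$, $q=2$, $b\ge 1$, since then $n_3\le (m-b)/2$ lies strictly below both $m$ and $\lceil (m+b)/2\rceil$.
\end{itemize}

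The gap is the case you left open: $k=2$, $b=0$, $q=2$, $n_3\ge 1$. No argument can close it, because the statement is false there.
\begin{itemize}
\item \emph{Counterexample.} Take $n=8\ge p^2(1)=4$. Theorem 2.1 gives $f_{2,0,2}(8)=4+f_{2,0,2}(2)=5$ and $g_{2,0,2}(8)=4+g_{2,0,2}(1)=5$. Directly, the chains in $[8]$ are $\{1,2,4,8\},\{3,6\},\{5\},\{7\}$. On the chain of length $4=2q$, Lemma 2.5 gives $\lceil 4/2\rceil=\lceil 4/3\rceil=2$, so every maximal set has exactly $5$ elements. The same happens for $n=9,10,11$.
\item \emph{Why your fallbacks fail.} The chain through $1$ need not have length $q+1$. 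When its length is exactly $2q$, its two extremes coincide, so strictness must come from another chain of length in $[q+1,2q-1]$, i.e.\ a chain start in $(n_{2q-1},n_q]$.
\item \emph{When such a start exists.} Since $n_q\ge p^{q-1}(n_{2q-1})$, this interval contains at least $k^{q-1}-1+b\langle k^{q-1}\rangle$ integers. Unless $(k,b,q)=(2,0,2)$, that is at least two consecutive integers, one of which is $\not\equiv b\pmod{k}$ and hence a chain start.
\end{itemize}
The paper's ``rate of growth'' sentence hides the same error: at $n=8$ we have $h(8)=3$, so $g$ increases while $f$ does not. So the theorem, and your proof, hold exactly for $(k,b,q)\ne(2,0,2)$. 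For $(2,0,2)$, equality holds precisely for $n\in\{1,2,3,8,9,10,11\}$.
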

\noindent\emph{Proof.} One can show that
$$  f_{k, b, q}(n + 1) = \begin{cases} f_{k, b, q}(n) & \text{if } q \nmid h(n + 1), \\ f_{k, b, q}(n) + 1 &\text{if } q \mid h(n + 1). \end{cases}  $$
Similarly, 
$$  g_{k, b, q}(n + 1) = \begin{cases} g_{k, b, q}(n) & \text{if } 2q-1 \nmid h(n + 1), \\ g_{k, b, q}(n) + 1 &\text{if } 2q-1 \mid h(n + 1). \end{cases}  $$
Hence, when $n<p^q (1)$, for any $x\in[n], q\mid h(x)$ if and only if $h(x)=0$, and so $(2q-1)\mid h(x)$. This implies that $f_{k,b,q} (n)=g_{k,b,q} (n)$ due to the recurrence relations. On the other hand, when $n\ge p^q (1)$, the rate of growth of $f_{k,b,q} (n)$ is evidently faster than that of $g_{k,b,q} (n)$, and thus, they cannot be equal.
$\hfill\square$

\section{Further Research}
For future research, it will be interesting to investigate the problem in a multi-dimensional space. For instance, one can start from the following generalisation for the double-free subsets:

Let $m$ and $n_1,n_2,\dots,n_m$ be positive integers. Define $T=\{(x_1,x_2,\dots,x_m ):1\le x_i\le n_i,\forall i=\overline{1,m}\}$. Let $g(n_1,\dots,n_m )$ be the maximum number of lattice points one can choose from $T$ such that if $(x_1,\dots,x_m)$ is chosen, then $(2x_1,\dots,2x_m )$ is not. Determine $g(n_1,\dots,n_m )$.

It can be proven that
$$g(n_1,\dots,n_m)=\prod_{i=1}^m n_i-\prod_{i=1}^m\left\lfloor\frac{n_i}{2}\right\rfloor+g\left(\left\lfloor\frac{n_1}{4}\right\rfloor,\dots,\left\lfloor\frac{n_m}{4}\right\rfloor\right).$$
Hence,
$$g(n_1,\dots,n_m)=\sum\limits_{i\ge0}(-1)^i\prod_{j=1}^m\left\lfloor\frac{n_j}{2^i}\right\rfloor.$$

It is unclear whether Lemma 2.5 can be extended in this case, even to the Cartesian plane, which is critical to generalise Theorem 2.6.


\begin{thebibliography}{99}
\bibitem{b1}	Bolian Liu and Zhou Bo, \emph{A generalization of maximal k-multiple-free sets of integers}. Ars Combinatoria 52(1999), 285-288.
\bibitem{b2}	D. Wakeham and D.R. Wood, \emph{On multiplicative Sidon sets}. Integers: Annual Volume 2013, (Ed. Bruce Landman). Berlin, Boston: De Gruyter, 2014, 392-401. https://doi.org/10.1515/9783110298161.392
\bibitem{b3}	Edward T. H. Wang, \emph{On double-free sets of integers}. Ars Combinatoria, 28(1989), 97–100.
\bibitem{b4}	F. Chung, P. Erdős, and R. Graham, \emph{On Sparse Sets Hitting Linear Forms}. In Number Theory for the Millennium, vol. 1, Proc. 2000 Urbana Conf. (Ed. M. A. Bennett, B. C. Berndt, N. Boston, H. G. Diamond, A. J. Hildebrand, and W. Philipp). A K Peters, Natick, MA, 2002, 257-272.
\bibitem{b5}	J. Y-T Leung and W-D Wei, \emph{Maximal k-multiple-free sets of integers}. Ars Combinatoria, 38(1994), 113–117.
\bibitem{b6}	Lihua You, Gexin Yu, Bolian Liu, \emph{On maximal (k,b)-linear-free sets of integers and its spectrum}. Australasian Journal of Combinatorics, 23(2001), 211-215.  
\bibitem{b7}	Nguyen Quang Minh, \emph{Problems 12252}. The American Mathematical Monthly, 128:5(2021), 467.
\bibitem{b8}	R. Graham, J. Spencer, and H. Witsenhausen, \emph{On Extremal Density Theorems for Linear Forms}. In Number Theory and Algebra (Ed. H. Zassenhaus). Academic Press, New York, 1977, 103-109.





\end{thebibliography}
\end{document}